\newtheorem{theorem}{Theorem}[section]
\newtheorem{lemma}[theorem]{Lemma}
\newtheorem{proposition}[theorem]{Proposition}
\newtheorem{corollary}[theorem]{Corollary}
\theoremstyle{definition}
\newtheorem{definition}[theorem]{Definition}
\theoremstyle{remark}
\newtheorem{remark}[theorem]{Remark}
\numberwithin{equation}{section}
\email{raissouli.mustapha@gmail.com}
\email{chergui\_m@yahoo.fr}
\begin{document}
\setcounter{page}{1}

\title[BiGamma Function and Inequalities]{The BiGamma Function and some of its Related Inequalities}

\author[M. Ra\"{\i}ssouli, M. Chergui]{Mustapha Ra\"{\i}ssouli$^1$ and Mohamed Chergui$^2$}
\address{$^1$ Department of Mathematics, Faculty of Science, Moulay Ismail University, Meknes, Morocco.}
\address{$^2$ Department of Mathematics, CRMEF, EREAM Team, LaREAMI-Lab, K\'enitra, Morocco.}

\subjclass[2010]{33B15, 26D15, 33Exx}

\keywords{Gamma function, Bigamma function, Special functions}

\date{Received: xxxxxx; Revised: yyyyyy; Accepted: zzzzzz.}

\begin{abstract}
In this article, we define a special function called the Bigamma function. It provides a generalization of Euler's gamma function. Several algebraic properties of this new function are studied. In particular, results linking this new function to the standard Beta function have been provided. We have also established inequalities, which allow to approximate this new function.
\end{abstract}

\maketitle

\section{\bf Introduction}

Special functions have particular properties that make them easier to manipulate and use in different contexts. For example, they may have special recurrence relations, integration, and derivation formulas that facilitate their use in complex calculations \cite{Ref2,Viola}. Examples of such functions include Bessel functions, Legendre functions, and hypergeometric functions. They are very useful for modeling physical phenomena, and, in particular, many special functions are solutions of certain differential equations \cite{BealsWong, Ref1}.

In this context, the gamma function and the beta function have been extensively developed for their interesting properties and diverse applications in many areas of mathematics and physics.

Gamma function is defined, for $x>0$, by
\begin{equation}\label{11}
\Gamma(x):=\int_0^\infty t^{x-1}e^{-t}\;dt. 
\end{equation}

This function has many applications in mathematical analysis, probability and statistics, as well as in physics, such as quantum mechanics. It appears in a wide variety of contexts and is a good tool for solving many scientific problems.

The gamma function satisfies the following functional relationship: $\Gamma(x+1)=x\Gamma(x)$ for all $x>0$. This last relation, which implies that $\Gamma(n+1)=n!$ for any integer $n\geq0$, tells us that the gamma function is an extension of the factorial function to non-integer numbers.

The beta function is defined, for $x>0, y>0$, by
\begin{equation}\label{12}
B(x,y):=\int_0^1 t^{x-1}(1-t)^{y-1}\;dt.
\end{equation}
There is an interesting relationship making in connection the beta function and the gamma function given by
\begin{equation}\label{14}
B(x,y)=\frac{\Gamma(x)\Gamma(y)}{\Gamma(x+y)}.
\end{equation}
For more details on the properties of these special functions, some of their extensions and their applications, we refer the interested reader to \cite{AAR,CHA,DAB,OOA,Raicher1,Raicher2,ZIC}.

By the change of variable $u=e^{-t}$, \eqref{11} becomes
\begin{equation}\label{15}
\Gamma(x)=\int_0^1\big(-\ln\;t\big)^{x-1}\;dt.
\end{equation}
By setting $t=1-u$, \eqref{15} can be also written as follows
\begin{equation}\label{17}
\Gamma(x)=\int_0^1\big(-\ln(1-t)\big)^{x-1}\;dt.
\end{equation}

It is well-known that the map $x\longmapsto\Gamma(x)$ is infinitely differentiable at any $x>0$ and its derivative at order $n\geq0$ is given by
$$\Gamma^{(n)}(x)=\int_0^\infty\big(\ln\;t\big)^nt^{x-1}e^{-t}\;dt,$$
or, equivalently, by the change of variable $u=e^{-t}$,
$$\Gamma^{(n)}(x)=\int_0^{1}\big(\ln(-\ln\;t)\big)^n\big(-\ln\;t\big)^{x-1}\;dt.$$
In particular, we have
\begin{equation}\label{13}
\Gamma^{(n)}(1)=\int_0^{1}\big(\ln(-\ln\;t)\big)^n\;dt=\int_0^{1}\big(\ln(-\ln(1-t))\big)^n\;dt.
\end{equation}

The fundamental aim of this article is to provide a generalization of the standard gamma function presented in \eqref{11} and to highlight its properties. Determining a link between this new generalization and certain classical functions is also targeted by the present work. These points will be developed in the following section, followed by some concluding remarks.  

\section{\bf Bigamma function}

Inspired by \eqref{15} and \eqref{17}, we consider the following improper integral
\begin{multline}\label{21}
\int_0^1\big(-\ln\;t\big)^{x-1}\big(-\ln(1-t)\big)^{y-1}\;dt=\int_0^{1/2}\big(-\ln\;t\big)^{x-1}\big(-\ln(1-t)\big)^{y-1}\;dt\\
+\int_{1/2}^1\big(-\ln\;t\big)^{x-1}\big(-\ln(1-t)\big)^{y-1}\;dt,
\end{multline}
whose convergence we will study. For this, we need the following lemma.

\begin{lemma}\label{lem1}
For any $\alpha>-1$ and $x>0$, we have
$$\int_0^1t^{\alpha}\big(-\ln\;t\big)^{x-1}\;dt=\frac{\Gamma(x)}{(\alpha+1)^{x}}.$$
\end{lemma}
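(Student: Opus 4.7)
The plan is to reduce the integral to the standard definition \eqref{11} of $\Gamma(x)$ by a single change of variable, using the hypothesis $\alpha+1>0$ to ensure the resulting exponential decay at infinity.

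First I would set $u=-\ln t$, so that $t=e^{-u}$ and $dt=-e^{-u}\,du$. The endpoints $t=0$ and $t=1$ correspond to $u=+\infty$ and $u=0$ respectively, and one immediately gets
\begin{equation*}
\int_0^1 t^{\alpha}\big(-\ln t\big)^{x-1}\,dt=\int_0^\infty u^{x-1}e^{-(\alpha+1)u}\,du.
\end{equation*}
The condition $\alpha>-1$ ensures $\alpha+1>0$, which guarantees convergence at $+\infty$; the condition $x>0$ handles integrability at $0$.

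Next I would perform the affine rescaling $v=(\alpha+1)u$, giving $du=dv/(\alpha+1)$ and $u^{x-1}=v^{x-1}/(\alpha+1)^{x-1}$. Substituting and collecting the powers of $\alpha+1$ yields
\begin{equation*}
\int_0^\infty u^{x-1}e^{-(\alpha+1)u}\,du=\frac{1}{(\alpha+1)^{x}}\int_0^\infty v^{x-1}e^{-v}\,dv=\frac{\Gamma(x)}{(\alpha+1)^{x}},
\end{equation*}
by the definition \eqref{11} of the gamma function, which gives the desired identity.

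There is essentially no obstacle here: the result is a routine computation, and the only subtle point worth flagging explicitly is that both the absolute convergence of the original integral (the integrand is positive, with integrable singularities at $t=0$ and $t=1$ under the stated hypotheses) and the legitimacy of the two substitutions rely on $\alpha+1>0$ and $x>0$. Once convergence is verified, the two changes of variable deliver the formula directly.
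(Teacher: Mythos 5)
Your proof is correct and follows exactly the same route as the paper: the substitution $t=e^{-u}$ followed by the rescaling $(\alpha+1)u=v$, with the hypotheses $\alpha>-1$ and $x>0$ justifying convergence. Your write-up is in fact slightly more careful, since the paper's intermediate integral has a typographical upper limit of $1$ where $\infty$ is meant.
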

\begin{proof}
Making the change of variable $t=e^{-u}$, we obtain
$$\int_0^1t^{\alpha}\big(-\ln\;t\big)^{x-1}\;dt=\int_0^1e^{-(\alpha+1)u}\;u^{x-1}\;du.$$
By applying the change of variable $(\alpha+1)u=s$ in this last integral, we deduce the desired result after a simple manipulation.
\end{proof}

Now, we will establish the convergence of the integral \eqref{21}. Indeed, we have $-\ln(1-t)\sim t$ and so $\big(-\ln(1-t)\big)^{y-1}\sim t^{y-1}$, when $t\longrightarrow0$. \\
By Lemma \ref{lem1}, we then deduce that the first integral in the second side of \eqref{21} is convergent for any $x>0,y>0$. The convergence of the last integral in \eqref{21} can be deduced when using the change of variable $t=1-u$.

After this, we may state the following main definition.

\begin{definition}
For $x>0,y>0$, the Bigamma function is defined by
\begin{equation}\label{22}
\Gamma(x,y):=\int_0^1\big(-\ln\;t\big)^{x-1}\big(-\ln(1-t)\big)^{y-1}\;dt.
\end{equation}
\end{definition}

It is clear that  $\Gamma(1,1)=1$, $\Gamma(x,y)=\Gamma(y,x)$ and $\Gamma(x,1)=\Gamma(1,x)=\Gamma(x)$. Furthermore, since $-\ln\;t>0$ and $-\ln(1-t)>0$ for any $t\in(0,1)$, then \eqref{22} immediately implies that $\Gamma(x,y)>0$ for any $x>0,y>0$. Other properties of $\Gamma(x,y)$ will be stated below.

\begin{proposition}For any $x>0, y>0$, there holds

$$\Gamma(x,y)=\int_0^\infty t^{x-1}e^{-t}\big(-\ln(1-e^{-t})\big)^{y-1}\;dt=\int_0^\infty t^{y-1}e^{-t}\big(-\ln(1-e^{-t})\big)^{x-1}\;dt.$$
\end{proposition}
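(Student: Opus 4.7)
The plan is to obtain both representations by making the substitution $t=e^{-u}$ directly in the defining integral \eqref{22}, and then to obtain the second representation either by the same change of variables applied after the symmetry $t\mapsto 1-t$, or more cleanly by invoking the already-noted identity $\Gamma(x,y)=\Gamma(y,x)$.

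First I would start from
$$\Gamma(x,y)=\int_0^1(-\ln t)^{x-1}(-\ln(1-t))^{y-1}\,dt$$
and set $t=e^{-u}$, so that $-\ln t = u$, $1-t = 1-e^{-u}$, and $dt=-e^{-u}\,du$. The limits transform from $t\in(0,1)$ to $u\in(0,\infty)$ (with the orientation absorbed into the sign of $dt$). Substituting yields
$$\Gamma(x,y)=\int_0^\infty u^{x-1}\,e^{-u}\,\bigl(-\ln(1-e^{-u})\bigr)^{y-1}\,du,$$
which is the first displayed identity after renaming $u$ to $t$.

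For the second identity, the cleanest route is to recall from the remarks following the definition that $\Gamma(x,y)=\Gamma(y,x)$ (which follows from \eqref{22} via $t\mapsto 1-t$). Applying the first representation already proved, but with the roles of $x$ and $y$ interchanged, immediately gives
$$\Gamma(x,y)=\Gamma(y,x)=\int_0^\infty t^{y-1}e^{-t}\bigl(-\ln(1-e^{-t})\bigr)^{x-1}\,dt,$$
which is exactly the second claim.

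No real obstacle is expected here: the substitution $t=e^{-u}$ is elementary and its use is already suggested by the paper (it was used to derive \eqref{15} and in the proof of Lemma \ref{lem1}). The only care to take is to justify that the improper integral transforms correctly, i.e.\ that convergence at $u\to 0^+$ and $u\to\infty$ matches the convergence of \eqref{22} at $t\to 1^-$ and $t\to 0^+$ respectively; this was already established in the discussion preceding the definition, so nothing new needs to be checked.
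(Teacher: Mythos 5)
Your proposal is correct and follows exactly the paper's own argument: the substitution $t=e^{-u}$ (equivalently $u=-\ln t$) in \eqref{22} gives the first representation, and the symmetry $\Gamma(x,y)=\Gamma(y,x)$ gives the second. Nothing to add.
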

\begin{proof}
The first relationship can be easily obtained by performing the change of variable $u=-\ln\,t$ in \eqref{22}, and the second one follows from the fact that $\Gamma(x,y)=\Gamma(y,x)$.
\end{proof}

\begin{proposition}
Let $a>0$ be a fixed real number. For any $x>0$ and $y>0$, we have
\begin{equation}\label{a}
\Gamma(x,y)=a^x\int_0^1t^{a-1}(-\ln\;t)^{x-1}\big(-\ln(1-t^a)\big)^{y-1}\;dt.
\end{equation}
\end{proposition}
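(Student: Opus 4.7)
The plan is a single change of variable in the defining integral \eqref{22}. Starting from
$$\Gamma(x,y) = \int_0^1 \big(-\ln u\big)^{x-1}\big(-\ln(1-u)\big)^{y-1}\, du,$$
I would substitute $u = t^a$, which is a $C^1$ bijection from $(0,1)$ onto itself for any $a>0$ (so the endpoints, and hence the improper nature of the integral, are preserved). Then $du = a\, t^{a-1}\, dt$ and $-\ln u = -a \ln t$, hence $\big(-\ln u\big)^{x-1} = a^{x-1}(-\ln t)^{x-1}$, while $-\ln(1-u) = -\ln(1-t^a)$ is unchanged in form.

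Collecting the factors of $a$ yields $a^{x-1}\cdot a = a^x$ out front, and the integrand inside becomes exactly $t^{a-1}(-\ln t)^{x-1}\big(-\ln(1-t^a)\big)^{y-1}$, which is \eqref{a}.

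The only point requiring a little care is the justification that this substitution is legitimate on the improper integral, since $(-\ln t)^{x-1}$ blows up at $t=1$ when $x<1$, and $(-\ln(1-t^a))^{y-1}$ blows up at $t=1$ when $y<1$, etc. I would handle this the same way the paper handled the convergence of \eqref{21}: perform the change of variable on $\int_\varepsilon^{1-\varepsilon}$, invoke the standard substitution rule there, and then let $\varepsilon \to 0^+$, using the already-established convergence of the defining integral together with the fact that $-\ln(1-t^a) \sim a\, t^{a-1}(1-t)$ as $t \to 1^-$ and $-\ln(1-t^a) \sim t^a$ as $t \to 0^+$ (via Lemma \ref{lem1}) to show the transformed integral is also absolutely convergent. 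This convergence check is the only non-trivial step; the algebraic manipulation itself is mechanical.
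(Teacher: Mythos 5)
Your proof is correct and uses exactly the change of variable ($u=t^a$, equivalently the paper's $t=u^a$) that the paper's own one-line proof invokes; the extra care you take in justifying the substitution on the improper integral via truncation to $[\varepsilon,1-\varepsilon]$ is sound and in fact makes the asymptotic estimates redundant, since convergence of the transformed integral then follows from that of \eqref{22}. One minor slip in that (unneeded) side-remark: as $t\to1^-$ one has $1-t^a\sim a(1-t)$ and hence $-\ln(1-t^a)\sim-\ln(1-t)\to+\infty$, not $-\ln(1-t^a)\sim a\,t^{a-1}(1-t)$, which tends to $0$.
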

\begin{proof}
Making the change of variable $t=u^a$, we deduce the desired result from \eqref{22} after a simple reduction.
\end{proof}

\begin{proposition}
Let $a>0$ with $a\neq1$, $x>0$ and $y>0$. We have the following assertions: 
\begin{itemize}
  \item[$i)$] If $x\leq1$ and  $a<1$, the following inequality holds
\begin{equation}\label{aa}
\Gamma(x,y)\geq \big(\frac{a}{e}\big)^{x-1}\Big(\frac{1-x}{1-a}\Big)^{x-1}\int_0^1t^{\frac{1-a}{a}}\Big(-\ln(1-t)\Big)^{y-1}\;dt.
\end{equation}
  \item[$ii)$] If $x\geq1$ and $a>1$, \eqref{aa} is reversed for $y>\frac{a-1}{a}$.
\end{itemize}

\end{proposition}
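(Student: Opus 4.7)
The plan is to start from the identity \eqref{a} and combine it with a sharp pointwise bound on the auxiliary function $g(t):=t^{a-1}(-\ln t)^{x-1}$ on $(0,1)$; multiplying that bound by the nonnegative weight $(-\ln(1-t^a))^{y-1}$, integrating, and performing the substitution $s=t^a$ then yields \eqref{aa}. Throughout I would treat $x\ne 1$ as the main case, the boundary situation $x=1$ being a trivial consequence of $\Gamma(1,y)=\Gamma(y)$ and the monotonicity of $t^{a-1}$.

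To locate the extremum of $g$, I would apply logarithmic differentiation:
\[
\frac{g'(t)}{g(t)}=\frac{a-1}{t}+\frac{x-1}{t\ln t},
\]
whose unique interior zero is $t^{\ast}=\exp\bigl((x-1)/(1-a)\bigr)$; this lies in $(0,1)$ under either set of hypotheses since $(x-1)/(1-a)<0$ there. A direct computation at $t^{\ast}$ yields
\[
g(t^{\ast})=e^{1-x}\left(\frac{1-x}{1-a}\right)^{x-1}.
\]

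Whether $t^{\ast}$ is a global minimum or maximum is determined by the boundary behaviour of $g$. In case $(i)$ one has $g(t)\to+\infty$ both as $t\to 0^{+}$ (the polynomial blow-up of $t^{a-1}$ overpowers the logarithmic decay of $(-\ln t)^{x-1}$) and as $t\to 1^{-}$ (where $(-\ln t)^{x-1}\to+\infty$ since $x-1<0$), so $t^{\ast}$ is a global minimum and $g(t)\ge g(t^{\ast})$ on $(0,1)$. In case $(ii)$ the two factors conspire to make $g(t)\to 0$ at each endpoint, so $t^{\ast}$ is a global maximum and the reverse bound $g(t)\le g(t^{\ast})$ holds.

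Multiplying the pointwise estimate by $(-\ln(1-t^a))^{y-1}\ge 0$, integrating over $(0,1)$, multiplying by $a^x$, and invoking \eqref{a} produces
\[
\Gamma(x,y)\ \ge\ a^x e^{1-x}\left(\frac{1-x}{1-a}\right)^{x-1}\int_0^1\bigl(-\ln(1-t^a)\bigr)^{y-1}\,dt
\]
in case $(i)$, and the reverse inequality in case $(ii)$. The change of variables $s=t^a$ rewrites the last integral as $\tfrac{1}{a}\int_0^1 s^{(1-a)/a}(-\ln(1-s))^{y-1}\,ds$, and the algebraic identity $a^{x-1}e^{1-x}=(a/e)^{x-1}$ delivers exactly the right-hand side of \eqref{aa}. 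The extra assumption $y>(a-1)/a$ in $(ii)$ is precisely the convergence condition of this integral near $s=0$, where the integrand behaves like $s^{(1-a)/a+y-1}$; in case $(i)$ the same condition is automatic since $(a-1)/a<0<y$. The main obstacle is the boundary analysis that identifies $t^{\ast}$ as a minimum or a maximum in each regime; once that is in place, the remaining steps are routine bookkeeping and a single substitution.
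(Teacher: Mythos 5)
Your proposal is correct and follows essentially the same route as the paper: both bound the auxiliary factor $t^{a-1}(-\ln t)^{x-1}$ by its value at the unique interior critical point $t_0=e^{(1-x)/(a-1)}$, insert this into the representation \eqref{a}, and finish with the substitution $s=t^a$; your logarithmic differentiation and explicit endpoint analysis are only cosmetic refinements of the paper's argument. The convergence discussion (automatic in case $(i)$, requiring $y>(a-1)/a$ in case $(ii)$) also matches the paper's.
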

\begin{proof}
For fixed $x>0,a>0$, we set $\phi(t)=t^{a-1}(-\ln\;t)^{x-1}$. Simple computation leads to
$$\phi^{'}(t)=t^{a-2}(-\ln\;t)^{x-2}\Big((a-1)(-\ln\;t)-(x-1)\Big),$$ 
and if $a\neq1$ then, $\phi^{'}(t)=0$ if and only if $t=t_0:=e^{\frac{1-x}{a-1}}$. Under the assumption $x\leq1$ and $ a<1$, we get $t_0\in(0,1]$. With this, it is easy to see that the map $\phi$ realizes a minimum at $t_0$, with $\phi(t_0)=e^{1-x}\big(\frac{1-x}{1-a}\big)^{x-1}$.\\
By the use of \eqref{a}, we then deduce that
\begin{gather}\label{aa1}
  \Gamma(x,y)\geq a^xe^{1-x}\Big(\frac{1-x}{1-a}\Big)^{x-1}\int_0^1\big(-\ln(1-t^a)\big)^{y-1}\;dt.
\end{gather}

After this, we should justify that this latter improper integral is convergent. By similar arguments as in the previous results, we may check that such integral converges for any $y>0$ if $a<1$.\\ Now, if $x\geq1$ and $a>1$ then the map $\phi$ realizes a maximum at $t_0\in(0,1]$. In this case, the previous improper integral converges if $y>\frac{a-1}{a}$. Finally, making the change of variable $u=t^a$ in the integral in the right side of \eqref{aa1}, we get the desired result after simple operations.
\end{proof}

\begin{corollary}
Let $x\leq1$ and $y\geq1$. Then the following inequality
$$\Gamma(x,y)\geq\frac{a}{1+a(y-1)}\big(\frac{a}{e}\big)^{x-1}\Big(\frac{1-x}{1-a}\Big)^{x-1}$$
holds for any $0<a<1$. In particular, if $y=1$ then $\Gamma(x,1)=\Gamma(x)$ and we obtain
$$\forall\; x\leq1\;\;\; \forall\; 0<a<1\;\;\;\;\; \Gamma(x)\geq a^xe^{1-x}\Big(\frac{1-x}{1-a}\Big)^{x-1}.$$
\end{corollary}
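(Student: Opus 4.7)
The plan is to derive the corollary as a direct consequence of part $i)$ of the preceding proposition together with a simple minorization of the integrand.

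First I would apply the proposition: since $x\leq1$ and $0<a<1$, inequality \eqref{aa} yields
$$\Gamma(x,y)\geq\Big(\frac{a}{e}\Big)^{x-1}\Big(\frac{1-x}{1-a}\Big)^{x-1}\int_0^1 t^{\frac{1-a}{a}}\bigl(-\ln(1-t)\bigr)^{y-1}\,dt.$$
The task then reduces to showing that, for $y\geq1$,
$$J(y):=\int_0^1 t^{\frac{1-a}{a}}\bigl(-\ln(1-t)\bigr)^{y-1}\,dt\;\geq\;\frac{a}{1+a(y-1)}.$$

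Next I would use the classical inequality $-\ln(1-t)\geq t$ for $t\in(0,1)$ (immediate from concavity of $\ln$, or from the Taylor expansion $-\ln(1-t)=\sum_{k\geq1}t^k/k$). Because $y-1\geq0$, raising this to the power $y-1$ preserves the direction of the inequality, hence $\bigl(-\ln(1-t)\bigr)^{y-1}\geq t^{y-1}$. Therefore
$$J(y)\geq\int_0^1 t^{\frac{1-a}{a}+y-1}\,dt=\frac{1}{\frac{1-a}{a}+y}=\frac{a}{1+a(y-1)},$$
which, combined with the previous display, gives precisely the announced lower bound on $\Gamma(x,y)$.

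Finally, for the particular case $y=1$ one already has $\Gamma(x,1)=\Gamma(x)$ and the coefficient $a/\bigl(1+a(y-1)\bigr)$ collapses to $a$, so the general inequality reduces to
$$\Gamma(x)\geq a\cdot\Big(\frac{a}{e}\Big)^{x-1}\Big(\frac{1-x}{1-a}\Big)^{x-1}=a^x e^{1-x}\Big(\frac{1-x}{1-a}\Big)^{x-1},$$
which is exactly the stated specialization. I do not foresee a serious obstacle: the only nontrivial input is the elementary estimate $-\ln(1-t)\geq t$, and everything else is a direct substitution and a power integral.
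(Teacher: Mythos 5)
Your proposal is correct and follows essentially the same route as the paper: apply inequality \eqref{aa}, use $-\ln(1-t)\geq t$ raised to the power $y-1\geq0$ to minorize the integrand, and evaluate the resulting power integral to get $\frac{a}{1+a(y-1)}$. The specialization at $y=1$ is also handled identically.
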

\begin{proof}
For all $t\in(0,1)$ we have $-\ln(1-t)\geq t$ and so, $\big(-\ln(1-t)\big)^{y-1}\geq t^{y-1}$ if $y\geq1$. This, with a simple computation of integral yields
$$\int_0^1t^{\frac{1-a}{a}}\Big(-\ln(1-t)\Big)^{y-1}\;dt\geq\int_0^1t^{\frac{1-a}{a}}\;t^{y-1}\;dt=\frac{a}{1+a(y-1)}.$$
Substituting this in \eqref{aa}, we get the required inequality.
\end{proof}

\begin{remark}
By virtue of the relation $\Gamma(x,y)=\Gamma(y,x)$, analogous results to the previous ones can be stated when permuting $x$ and $y$. 
\end{remark}

The following result may be also stated.

\begin{proposition}\label{pr1}
The following assertions hold:
\begin{itemize}
  \item[$\bullet$] $\mbox{If}\;\; x\geq1, y\geq1\;\;\mbox{then}\;\; \Gamma(x,y)\geq\max\left(\frac{\Gamma(x)}{y^x},\frac{\Gamma(y)}{x^y}\right).$
  \item[$\bullet$] $\mbox{If}\;\; x\geq1, 0<y\leq1\;\;\mbox{then}\;\; \frac{\Gamma(y)}{x^y}\leq\Gamma(x,y)\leq\frac{\Gamma(x)}{y^x}.$
  \item[$\bullet$] $\mbox{If}\;\; 0<x\leq1, y\geq1\;\;\mbox{then}\;\; \frac{\Gamma(x)}{y^x}\leq\Gamma(x,y)\leq\frac{\Gamma(y)}{x^y}.$
  \item[$\bullet$] $\mbox{If}\;\; 0<x\leq1, 0<y\leq1\;\;\mbox{then}\;\; \Gamma(x,y)\leq\min\left(\frac{\Gamma(x)}{y^x},\frac{\Gamma(y)}{x^y}\right).$
\end{itemize}
\end{proposition}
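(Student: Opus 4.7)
The plan is to reduce every bound to a single trick: the elementary inequality $-\ln(1-t)\geq t$ valid on $(0,1)$, together with Lemma \ref{lem1} applied at $\alpha=y-1$ (respectively $\alpha=x-1$) to get the explicit identity
$$\int_0^1 t^{y-1}(-\ln t)^{x-1}\,dt=\frac{\Gamma(x)}{y^x},\qquad \int_0^1 t^{x-1}(-\ln t)^{y-1}\,dt=\frac{\Gamma(y)}{x^y}.$$
Once these two closed forms are in hand, each of the four items is a one-line comparison.

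First I would handle the case $x\geq 1$, $y\geq 1$. Since $-\ln(1-t)\geq t>0$ and $y-1\geq 0$, one has $\bigl(-\ln(1-t)\bigr)^{y-1}\geq t^{y-1}$, so inserting this lower bound into \eqref{22} and applying the first integral identity gives $\Gamma(x,y)\geq \Gamma(x)/y^x$. The bound $\Gamma(x,y)\geq \Gamma(y)/x^y$ then follows by swapping the roles of $x$ and $y$, thanks to $\Gamma(x,y)=\Gamma(y,x)$; taking the maximum yields item one. The fourth item is the mirror image: if $0<x\leq 1$ and $0<y\leq 1$ then $y-1\leq 0$, so raising both sides of $-\ln(1-t)\geq t$ to the power $y-1$ reverses the inequality, giving $\bigl(-\ln(1-t)\bigr)^{y-1}\leq t^{y-1}$ and therefore $\Gamma(x,y)\leq \Gamma(x)/y^x$; symmetry again delivers the companion bound.

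The two mixed cases combine the two directions. For $x\geq 1$, $0<y\leq 1$, the inequality $\bigl(-\ln(1-t)\bigr)^{y-1}\leq t^{y-1}$ from the fourth-case reasoning produces $\Gamma(x,y)\leq \Gamma(x)/y^x$, while writing $\Gamma(x,y)=\Gamma(y,x)$ and applying the first-case reasoning with the roles of $x,y$ swapped (so that the exponent $x-1\geq 0$ is now applied to $-\ln(1-t)$) yields $\Gamma(x,y)\geq \Gamma(y)/x^y$. Item three is the same argument with $x$ and $y$ interchanged.

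I do not foresee a real obstacle here: the whole proposition is a bookkeeping exercise in monotonicity of the power map $u\mapsto u^{y-1}$ on $[0,\infty)$ according to the sign of $y-1$, combined with the explicit evaluation from Lemma \ref{lem1}. The only thing worth double-checking while writing it up is that every upper bound used in the weaker direction is indeed a finite integral, which is already guaranteed by the convergence discussion preceding Definition 2.2.
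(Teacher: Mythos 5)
Your proposal is correct and follows exactly the paper's argument: the pointwise inequality $-\ln(1-t)\geq t$ on $(0,1)$, the monotonicity of $u\mapsto u^{y-1}$ according to the sign of $y-1$, the evaluation $\int_0^1 t^{y-1}(-\ln t)^{x-1}\,dt=\Gamma(x)/y^x$ from Lemma \ref{lem1}, and the symmetry $\Gamma(x,y)=\Gamma(y,x)$. The paper leaves the bookkeeping to the reader, whereas you spell it out, but there is no difference in method.
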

\begin{proof}
For any $t\in(0,1)$ one has $-\ln(1-t)\geq t>0$. Then, $\big(-\ln(1-t)\big)^{y-1}\geq t^{y-1}$ if $y\geq1$, and $\big(-\ln(1-t)\big)^{y-1}\leq t^{y-1}$ if $y\leq1$.\\
To achieve the proof, we then use \eqref{22}, Lemma \ref{lem1} and the fact that $\Gamma(x,y)=\Gamma(y,x)$. The details are straightforward and therefore omitted here.
\end{proof}

The two following corollaries are immediate from the previous proposition and therefore we omit their proofs for the reader.

\begin{corollary}
If $x\geq1$ then we have $\Gamma(x,x)\geq\dfrac{\Gamma(x)}{x^x}$, with reversed inequality if\; $0<x\leq 1$.
\end{corollary}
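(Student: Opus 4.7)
The plan is to obtain this corollary as a direct specialization of Proposition~\ref{pr1} by setting $y=x$. Since both bounds $\Gamma(x)/y^x$ and $\Gamma(y)/x^y$ in that proposition collapse to the single quantity $\Gamma(x)/x^x$ when $y=x$, the $\max$ and $\min$ appearing there become redundant, and the two relevant cases (both variables $\geq 1$, or both variables in $(0,1]$) give exactly the two inequalities claimed.

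More concretely, I would first invoke the case $x\geq 1,\,y\geq 1$ of Proposition~\ref{pr1} with $y$ replaced by $x$. This yields
\[
\Gamma(x,x)\;\geq\;\max\!\left(\frac{\Gamma(x)}{x^x},\frac{\Gamma(x)}{x^x}\right)\;=\;\frac{\Gamma(x)}{x^x},
\]
which is the first statement. Then I would apply the case $0<x\leq 1,\,0<y\leq 1$ with $y=x$ to obtain
\[
\Gamma(x,x)\;\leq\;\min\!\left(\frac{\Gamma(x)}{x^x},\frac{\Gamma(x)}{x^x}\right)\;=\;\frac{\Gamma(x)}{x^x},
\]
giving the reversed inequality. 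The two "mixed" cases of Proposition~\ref{pr1} are irrelevant here because the diagonal $y=x$ cannot place the variables on opposite sides of $1$.

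There is essentially no obstacle: the corollary is a bookkeeping consequence, as the authors themselves note by omitting the proof. The only point worth a line of care is observing that the symmetry of the bounds under the substitution $y=x$ is what eliminates the $\max$/$\min$, so no additional estimate on $\Gamma(x)$ is needed beyond what Proposition~\ref{pr1} already provides.
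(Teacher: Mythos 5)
Your proposal is correct and coincides with the paper's intended argument: the authors state that this corollary is immediate from Proposition~\ref{pr1} and omit the proof, and specializing the first and fourth cases of that proposition to $y=x$ is exactly the justification. Nothing further is needed.
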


\begin{corollary}
The following inequalities
$$x^{-1/x}\Gamma(\frac{1}{x})\leq\Gamma\big(x,\frac{1}{x}\big)\leq x^x\Gamma(x)$$
hold for any $x\geq1$, with reversed inequalities if\; $0<x\leq 1$.
\end{corollary}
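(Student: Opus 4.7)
The plan is to recognize that this corollary is a direct specialization of Proposition \ref{pr1} obtained by choosing $y=1/x$. The only real content is checking that the dichotomy $x\geq 1$ versus $0<x\leq 1$ matches up with exactly the middle two bullets of Proposition \ref{pr1}, and then rewriting the resulting quantities in the form displayed in the statement.

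First I would handle the case $x\geq 1$. In this range one has $0<1/x\leq 1$, so the pair $(x,y)$ with $y=1/x$ satisfies the hypothesis of the second bullet of Proposition \ref{pr1}, namely
$$\frac{\Gamma(y)}{x^y}\leq \Gamma(x,y)\leq \frac{\Gamma(x)}{y^x}.$$
Substituting $y=1/x$ and simplifying $(1/x)^x=x^{-x}$ together with $1/x^{1/x}=x^{-1/x}$ converts this directly into the required chain
$$x^{-1/x}\Gamma(1/x)\leq \Gamma(x,1/x)\leq x^x\Gamma(x).$$

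For the second case, $0<x\leq 1$, I would observe that $1/x\geq 1$, so now the third bullet of Proposition \ref{pr1} applies to the pair $(x,1/x)$, giving the inequality $\Gamma(x)/y^x\leq \Gamma(x,y)\leq \Gamma(y)/x^y$ with $y=1/x$. The same elementary simplifications then produce exactly the reversed chain $x^x\Gamma(x)\leq \Gamma(x,1/x)\leq x^{-1/x}\Gamma(1/x)$, which is what the corollary asserts.

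There is no substantive obstacle here, which is precisely why the authors omit the proof: the corollary is a bookkeeping specialization of Proposition \ref{pr1}, and the only nontrivial move is noticing that inverting $x$ automatically flips one into the complementary hypothesis range, so the two ``with reversed inequality'' cases of the corollary correspond to the two bullets of Proposition \ref{pr1} in which the orientation of the estimate on $\Gamma(x,y)$ is opposite.
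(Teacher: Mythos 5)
Your proposal is correct and matches the route the paper intends: the authors explicitly omit the proof as ``immediate from the previous proposition,'' and your specialization $y=1/x$ into the second and third bullets of Proposition \ref{pr1}, with the simplifications $(1/x)^x=x^{-x}$ and $x^{1/x}$ in the denominators, is exactly that argument.
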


We have also the following result.

\begin{proposition}
For any $x\geq1,y\geq1$ we have $\Gamma(x,y)\geq B(x,y)$, with reversed inequality if $x\leq1$ and $y\leq1$.
\end{proposition}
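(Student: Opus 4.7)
The plan is to reduce everything to the elementary inequality $-\ln t\ge 1-t$, valid for all $t\in(0,1)$ (equivalent to $\ln s\le s-1$ with $s=t$). Applied once directly and once with $t$ replaced by $1-t$, this gives the two pointwise bounds
\[
-\ln t\ge 1-t>0\qquad\text{and}\qquad -\ln(1-t)\ge t>0,\qquad t\in(0,1).
\]

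Next I would raise these to the exponents $x-1$ and $y-1$, paying attention to the sign of the exponent. When $x\ge 1$ and $y\ge 1$, both exponents are nonnegative, so the inequalities are preserved:
\[
\bigl(-\ln t\bigr)^{x-1}\ge (1-t)^{x-1},\qquad \bigl(-\ln(1-t)\bigr)^{y-1}\ge t^{y-1}.
\]
Multiplying (all terms positive) and integrating over $(0,1)$ against Lebesgue measure — the integrand on the left is exactly the one defining $\Gamma(x,y)$ in \eqref{22}, and the integrand on the right is $t^{y-1}(1-t)^{x-1}$, whose integral is $B(y,x)$, equal to $B(x,y)$ by the symmetry of the beta function — produces $\Gamma(x,y)\ge B(x,y)$.

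For the reversed case $0<x\le 1$, $0<y\le 1$, the exponents $x-1$ and $y-1$ are nonpositive; since the bases are positive and the bigger base raised to a nonpositive power yields a smaller value, both pointwise inequalities flip:
\[
\bigl(-\ln t\bigr)^{x-1}\le (1-t)^{x-1},\qquad \bigl(-\ln(1-t)\bigr)^{y-1}\le t^{y-1}.
\]
The same multiplication-and-integration step then yields $\Gamma(x,y)\le B(x,y)$.

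There is no real obstacle: the only subtlety is bookkeeping the direction of the inequality when raising to an exponent whose sign depends on the parameter, and remembering to invoke $B(x,y)=B(y,x)$ at the end to match the statement as written. The mixed-sign cases ($x\ge 1,y\le 1$ or vice versa) are deliberately excluded from the claim, which is consistent with the fact that the pointwise inequalities then go in opposite directions and cannot be combined.
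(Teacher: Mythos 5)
Your proof is correct and follows essentially the same route as the paper: the paper's proof of this proposition simply invokes ``the same idea as in the proof of Proposition \ref{pr1}'', namely the pointwise bounds $-\ln t\ge 1-t$ and $-\ln(1-t)\ge t$ on $(0,1)$, raised to the exponents $x-1$ and $y-1$ with the direction tracked according to their signs, and then compared with the integrand of \eqref{12}. Your write-up just makes explicit the details (including the symmetry $B(x,y)=B(y,x)$) that the paper leaves to the reader.
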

\begin{proof}
We use the same idea as in the proof of Proposition \ref{pr1} with the help of \eqref{12}.
\end{proof}

\begin{corollary}
For any $x\in(0,1)$, there holds
$$\Gamma(x,1-x)\leq\Gamma(x)\Gamma(1-x)=\frac{\pi}{\sin(\pi x)}.$$
\end{corollary}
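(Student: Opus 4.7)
The plan is to recognize this corollary as an immediate specialization of the immediately preceding proposition, combined with two standard identities for the gamma and beta functions.

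First, I would fix $x\in(0,1)$ and set $y:=1-x$. Since $x\in(0,1)$ implies $y\in(0,1)$ as well, both $x$ and $y$ lie in the range $(0,1]$, so the hypothesis $x\leq 1,\ y\leq 1$ of the preceding proposition is satisfied (in fact strictly). Applying that proposition with this choice of $y$ yields
$$\Gamma(x,1-x)\leq B(x,1-x).$$

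Next, I would invoke the beta--gamma relation \eqref{14}, which gives
$$B(x,1-x)=\frac{\Gamma(x)\Gamma(1-x)}{\Gamma(x+(1-x))}=\frac{\Gamma(x)\Gamma(1-x)}{\Gamma(1)}=\Gamma(x)\Gamma(1-x),$$
since $\Gamma(1)=1$. This produces the middle expression of the claimed chain.

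Finally, to obtain the closed-form right-hand side, I would appeal to the classical Euler reflection formula $\Gamma(x)\Gamma(1-x)=\pi/\sin(\pi x)$, valid for $x\in(0,1)$, which is a well-known identity for the gamma function. Combining the three steps yields the desired inequality and equality. There is no real obstacle here: the only point worth noting is verifying that $x\in(0,1)$ ensures $1-x\in(0,1)$ so that the preceding proposition applies, which is the sole reason the hypothesis is stated on the open interval rather than on $[0,1]$.
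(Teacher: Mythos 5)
Your proposal matches the paper's proof exactly: the paper likewise applies the preceding proposition (the case $x\leq1$, $y\leq1$ with $y=1-x$) to get $\Gamma(x,1-x)\leq B(x,1-x)$, then uses \eqref{14} and Euler's reflection formula to identify $B(x,1-x)=\Gamma(x)\Gamma(1-x)=\pi/\sin(\pi x)$. The argument is correct and complete.
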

\begin{proof}
The result follows from the previous proposition with the use of \eqref{14}.
\end{proof}

\begin{proposition}
For any $x>0,y>0$ there holds
$$\Gamma(x,y)\geq\exp\Big((x+y-2)\;\Gamma^{'}(1)\Big).$$
\end{proposition}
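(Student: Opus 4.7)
The plan is to apply Jensen's inequality to the concave function $\ln$ with respect to Lebesgue measure on the unit interval (which is a probability measure). Starting from the definition \eqref{22}, the integrand $(-\ln t)^{x-1}(-\ln(1-t))^{y-1}$ is strictly positive on $(0,1)$, so Jensen's inequality in the form
\begin{equation*}
\ln\int_0^1 f(t)\,dt \geq \int_0^1 \ln f(t)\,dt
\end{equation*}
is available. I would apply it to $f(t) = (-\ln t)^{x-1}(-\ln(1-t))^{y-1}$.

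After taking the logarithm inside, the right-hand side splits as
\begin{equation*}
(x-1)\int_0^1 \ln(-\ln t)\,dt + (y-1)\int_0^1 \ln(-\ln(1-t))\,dt.
\end{equation*}
Here is where the identity \eqref{13} does the work: specialized to $n=1$, it states precisely that both of these integrals equal $\Gamma'(1)$. Substituting, one obtains $\ln\Gamma(x,y)\geq (x+y-2)\Gamma'(1)$, and exponentiating yields the claim.

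I expect no serious obstacle: the key insight is simply to recognize that the integral on $(0,1)$ is with respect to a probability measure, so Jensen is directly applicable, and that the exponent $x+y-2$ appearing in the statement is a telltale hint pointing to the pair of first-derivative integrals in \eqref{13}. The only minor point worth noting is the integrability of $\ln f$, which is not an issue since $\ln(-\ln t)$ and $\ln(-\ln(1-t))$ are integrable on $(0,1)$ (as guaranteed by \eqref{13} itself for $n=1$, and more generally for all $n$).
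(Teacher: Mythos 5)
Your proof is correct and follows exactly the paper's own route: Jensen's inequality applied to the concave logarithm over the probability space $(0,1)$, followed by splitting the logarithm and identifying both resulting integrals with $\Gamma'(1)$ via \eqref{13} for $n=1$. You have simply made explicit the ``simple algebraic manipulation'' that the paper leaves to the reader.
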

\begin{proof}
By Jensen's integral inequality and the concavity of the real function $x\longmapsto\ln\;x$, \eqref{22} yields
$$\ln \Gamma(x,y)\geq\int_0^1\ln\Big(\big(-\ln\;t\big)^{x-1}\big(-\ln(1-t)\big)^{y-1}\Big)\;dt.$$
This, with a simple algebraic manipulation and \eqref{13}, implies the desired result.
\end{proof}

From the previous proposition, we immediately deduce the following result.

\begin{corollary}
For any $x\in(0,1)$, the map $x\longmapsto\Gamma(x,1-x)$ is lower bounded with
$$\forall x\in(0,1)\;\;\;\;\; \Gamma(x,1-x)\geq e^{-\Gamma^{'}(1)}.$$
\end{corollary}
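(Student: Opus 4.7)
The plan is to apply the preceding proposition directly. That proposition states that for any $x>0,y>0$,
$$\Gamma(x,y)\geq\exp\Big((x+y-2)\,\Gamma^{'}(1)\Big),$$
so I would simply substitute $y=1-x$. This substitution is legal because for $x\in(0,1)$ both $x>0$ and $1-x>0$ hold, placing the pair $(x,1-x)$ in the admissible domain of the proposition.

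The key arithmetic step is to observe that with $y=1-x$ the exponent collapses in an $x$-independent way:
$$x+y-2=x+(1-x)-2=-1.$$
Thus the inequality from the proposition reduces to $\Gamma(x,1-x)\geq\exp(-\Gamma'(1))=e^{-\Gamma'(1)}$, which is precisely the claimed uniform lower bound. Since the right-hand side does not depend on $x$, this also yields the asserted lower boundedness of the map $x\longmapsto\Gamma(x,1-x)$ on $(0,1)$.

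There is essentially no obstacle here: the proof is a one-line specialization of the previous proposition. The only thing worth flagging is that the bound is uniform in $x$, which is what makes the ``lower bounded'' assertion meaningful. I would write the proof as a single displayed computation with a brief remark that the admissibility $x>0,\,1-x>0$ is equivalent to $x\in(0,1)$.
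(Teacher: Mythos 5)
Your proof is correct and matches the paper exactly: the corollary is stated there as an immediate consequence of the preceding proposition, obtained by setting $y=1-x$ so that the exponent $x+y-2$ collapses to $-1$. Nothing further is needed.
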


\begin{proposition}
The Bigamma function $(x,y)\longmapsto\Gamma(x,y)$ is logarithmically convex (so convex) on $(0,\infty)\times(0,\infty)$.
\end{proposition}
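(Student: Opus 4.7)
The plan is to establish log-convexity by a direct application of H\"older's inequality to the integral representation \eqref{22}. Fix $(x_1,y_1),(x_2,y_2)\in(0,\infty)\times(0,\infty)$ and $\lambda\in(0,1)$, and set the conjugate exponents $p=1/\lambda$, $q=1/(1-\lambda)$, so that $1/p+1/q=1$. Writing
$$(x,y):=\lambda(x_1,y_1)+(1-\lambda)(x_2,y_2),$$
the key observation is that in the integrand of $\Gamma(x,y)$ the exponents split as
$$x-1=\lambda(x_1-1)+(1-\lambda)(x_2-1),\qquad y-1=\lambda(y_1-1)+(1-\lambda)(y_2-1),$$
so that
$$(-\ln t)^{x-1}(-\ln(1-t))^{y-1}=F(t)^{\lambda}\,G(t)^{1-\lambda},$$
where
$$F(t):=(-\ln t)^{x_1-1}(-\ln(1-t))^{y_1-1},\qquad G(t):=(-\ln t)^{x_2-1}(-\ln(1-t))^{y_2-1}.$$

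Next I would invoke H\"older's inequality for the positive functions $F^{\lambda}$ and $G^{1-\lambda}$ on $(0,1)$ with exponents $p$ and $q$, which yields
$$\Gamma(x,y)=\int_0^1 F(t)^{\lambda}G(t)^{1-\lambda}\,dt\leq\left(\int_0^1 F(t)\,dt\right)^{\lambda}\left(\int_0^1 G(t)\,dt\right)^{1-\lambda}=\Gamma(x_1,y_1)^{\lambda}\,\Gamma(x_2,y_2)^{1-\lambda}.$$
Taking the logarithm of both sides (which is legitimate since $\Gamma(x,y)>0$ on the positive quadrant, as noted right after the definition) gives exactly
$$\ln\Gamma\bigl(\lambda(x_1,y_1)+(1-\lambda)(x_2,y_2)\bigr)\leq\lambda\ln\Gamma(x_1,y_1)+(1-\lambda)\ln\Gamma(x_2,y_2),$$
i.e.\ the logarithmic convexity of $\Gamma(x,y)$ on $(0,\infty)\times(0,\infty)$. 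Ordinary convexity then follows since $\exp$ is convex and nondecreasing, so a log-convex positive function is convex.

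There is essentially no hard step here; the only small point to be careful about is the applicability of H\"older, which requires that $\Gamma(x_1,y_1)$ and $\Gamma(x_2,y_2)$ be finite, and this is guaranteed by the convergence argument given after Lemma \ref{lem1} for all positive arguments. Thus the entire proof reduces to recognizing the affine splitting of the exponents and invoking H\"older's inequality.
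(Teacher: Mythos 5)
Your argument is correct and is essentially identical to the paper's own proof: both write the integrand of $\Gamma$ at the convex combination as a product $F^{\lambda}G^{1-\lambda}$ using the affine splitting of the exponents and then apply H\"older's inequality with exponents $1/\lambda$ and $1/(1-\lambda)$. The only difference is cosmetic (you spell out the finiteness hypothesis and the passage from log-convexity to convexity, which the paper leaves implicit).
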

\begin{proof}
Let $p,q\geq0$ with $p+q=1$. For $x,y,a,b\geq0$ we have
\begin{multline*}
\Gamma\Big(p(x,y)+q(a,b)\Big):=\Gamma\Big(px+qa,py+qb\Big)\\
:=\int_0^1\big(-\ln\;t\big)^{px+qa-1}\big(-\ln(1-t)\big)^{py+qb-1}\;dt\\
=\int_0^1\big(-\ln\;t\big)^{p(x-1)+q(a-1)}\big(-\ln(1-t)\big)^{p(y-1)+q(b-1)}\;dt\\
=\int_0^1\Big(\big(-\ln\;t\big)^{x-1}\big(-\ln(1-t)\big)^{y-1}\Big)^p\Big(\big(-\ln\;t\big)^{a-1}\big(-\ln(1-t)\big)^{b-1}\Big)^q\;dt.
\end{multline*}
Applying the H\"{o}lder inequality to this latter expression we get
$$\Gamma\Big(p(x,y)+q(a,b)\Big)\leq\big(\Gamma(x,y)\big)^p\big(\Gamma(a,b)\big)^q,$$
so completing the proof.
\end{proof}

To provide more results we need the following lemma.

\begin{lemma}
Let $m,n\geq1$ be integers. Then the following integral converges
\begin{equation}\label{ln}
\Gamma_{ln}(m,n):=\int_0^1\Big(\ln\big(-\ln\;t\big)\Big)^{m-1}\;\Big(\ln\big(-\ln(1-t)\big)\Big)^{n-1}\;dt.
\end{equation}
\end{lemma}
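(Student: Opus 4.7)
The plan is to split the integral at $t=1/2$ and handle the two pieces by symmetry. The change of variable $t\mapsto 1-t$ maps $[1/2,1)$ onto $(0,1/2]$ and interchanges the two logarithmic factors in the integrand; hence the integral of \eqref{ln} over $[1/2,1)$ equals the integral of the same form over $(0,1/2]$ with $m$ and $n$ swapped. It therefore suffices to show that, for every $m,n\geq 1$,
$$\int_0^{1/2}\big|\ln(-\ln t)\big|^{m-1}\,\big|\ln(-\ln(1-t))\big|^{n-1}\,dt<\infty.$$

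The key idea is to establish the uniform bounds $|\ln(-\ln t)|\leq |\ln t|$ and $|\ln(-\ln(1-t))|\leq |\ln t|$ for all $t\in(0,1/2]$. For the first, I would split $(0,1/2]$ at $t=1/e$: on $(0,1/e]$ one has $-\ln t\geq 1$ and so $0\leq \ln(-\ln t)\leq -\ln t=|\ln t|$ by the elementary inequality $\ln x\leq x$; on $[1/e,1/2]$ the factor $\ln(-\ln t)$ is a bounded continuous function, whereas $|\ln t|\geq \ln 2$, so the bound holds with room to spare. For the second factor, the estimate $-\ln(1-t)\geq t$ on $(0,1)$ gives, by monotonicity of $\ln$, $\ln(-\ln(1-t))\geq \ln t$; on $(0,1/2]$ we also have $-\ln(1-t)\leq \ln 2<1$ and therefore $\ln(-\ln(1-t))\leq 0$. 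Combining the two yields $|\ln(-\ln(1-t))|\leq |\ln t|$.

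With these two estimates, the integrand above is dominated on $(0,1/2]$ by $|\ln t|^{m+n-2}$, a nonnegative integer power of $|\ln t|$ since $m,n\geq 1$. Lemma \ref{lem1} applied with $\alpha=0$ and $x=m+n-1$ then gives
$$\int_0^{1/2}|\ln t|^{m+n-2}\,dt\leq \int_0^1(-\ln t)^{m+n-2}\,dt=\Gamma(m+n-1)<\infty,$$
which together with the symmetry reduction completes the argument. The main obstacle is the bookkeeping around the sign change of the iterated logarithms near $-\ln t=1$ (i.e.\ $t=1/e$) and near $-\ln(1-t)=1$; the trick that unlocks the proof is the observation that, despite these sign changes, each iterated-log factor is absorbed on $(0,1/2]$ by a single power of $|\ln t|$, so that the whole integrability question collapses to the one already contained in Lemma \ref{lem1}.
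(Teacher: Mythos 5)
Your proof is correct, but it takes a genuinely different route from the paper's. The paper bounds the product in the integrand by the arithmetic--geometric type inequality $|a||b|\leq\frac{1}{2}a^{2}+\frac{1}{2}b^{2}$, obtaining
$\big|\ln(-\ln t)\big|^{m-1}\big|\ln(-\ln(1-t))\big|^{n-1}\leq\frac{1}{2}\big(\ln(-\ln t)\big)^{2m-2}+\frac{1}{2}\big(\ln(-\ln(1-t))\big)^{2n-2}$,
and then invokes \eqref{13}, i.e.\ the known finiteness of $\Gamma^{(2m-2)}(1)$ and $\Gamma^{(2n-2)}(1)$, so convergence is inherited from the differentiability of $\Gamma$; as a bonus this decoupling immediately yields the quantitative estimate \eqref{Y} exploited in the remark that follows. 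You instead split at $t=1/2$, use the symmetry $t\mapsto 1-t$ to reduce to a neighbourhood of $0$, and dominate each iterated-logarithm factor there by a single power of $|\ln t|$ (via $\ln x\leq x$ for the first factor and $t\leq-\ln(1-t)\leq\ln 2<1$ for the second), which collapses everything to $\int_0^1(-\ln t)^{m+n-2}\,dt=\Gamma(m+n-1)$, i.e.\ to Lemma \ref{lem1} alone. Your argument is more self-contained --- it does not presuppose the integral formula for $\Gamma^{(n)}(1)$ --- at the cost of being longer and of not directly producing the bound \eqref{Y}. One small point of rigor: on the compact piece $[1/e,1/2]$ the mere boundedness of $\ln(-\ln t)$ does not by itself give $|\ln(-\ln t)|\leq|\ln t|$; you should note explicitly that $|\ln(-\ln t)|\leq-\ln\ln 2<\ln 2\leq|\ln t|$ there (or simply observe that integrability on a compact interval where the integrand is continuous is automatic, so only the behaviour near $0$ matters). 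This is easily repaired and does not affect the validity of the proof.
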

\begin{proof}
We will prove that \eqref{ln} converges absolutely. Indeed, by the inequality $|a||b|\leq\frac{1}{2}a^2+\frac{1}{2}b^2$, valid for any real numbers $a$ and $b$, we can write
\begin{multline}\label{conv}
\Big|\Big(\ln\big(-\ln\;t\big)\Big)\Big|^{m-1}\;\Big|\Big(\ln\big(-\ln(1-t)\big)\Big)\Big|^{n-1}\\
\leq\frac{1}{2}\Big(\ln\big(-\ln\;t\big)\Big)^{2m-2}+\frac{1}{2}\Big(\ln\big(-\ln(1-t)\big)\Big)^{2n-2}.
\end{multline}
This, with the help of \eqref{13}, implies the required result.
\end{proof}

The following remark may be of interest and worth to be mentioned.

\begin{remark}
(i) From \eqref{ln}, it is obvious that $\Gamma_{ln}(1,1)=1$, $\Gamma_{ln}(m,n)=\Gamma_{ln}(n,m)$ and $\Gamma_{ln}(n,1)=\Gamma_{ln}(1,n)=\Gamma^{(n-1)}(1)$, for any integers $m,n\geq1$.\\
(ii) From \eqref{conv}, we deduce that
\begin{equation}\label{Y}
\Big|\Gamma_{ln}(m,n)\Big|\leq\frac{1}{2}\Gamma^{(2m-2)}(1)+\frac{1}{2}\Gamma^{(2n-2)}(1).
\end{equation}
(iii) The inequality \eqref{Y} can be improved when applying the H\"{o}lder inequality to \eqref{ln} and we get
$$\Big(\Gamma_{ln}(m,n)\Big)^2\leq\;\Gamma^{(2m-2)}(1)\;\Gamma^{(2n-2)}(1).$$
In particular, we have for any integer $n\geq1$
$$\Big|\Gamma_{ln}(n,n)\Big|\leq\Gamma^{(2n-2)}(1).$$
\end{remark}

Now, we are in the position to state the following result.

\begin{theorem}
For any $x>0,y>0$ there holds
\begin{equation}\label{sum}
\Gamma(x,y)=\sum_{m=n=0}^{\infty}\frac{(x-1)^m(y-1)^n}{m!\;n!}\;\Gamma_{ln}(m+1,n+1),
\end{equation}
or, equivalently,
$$\Gamma(x,y)= 1+ \big(x+y-2\big)\;\Gamma^{'}(1)+\sum_{m=n=1}^{\infty}\frac{(x-1)^m(y-1)^n}{m!\;n!}\;\Gamma_{ln}(m+1,n+1).$$
\end{theorem}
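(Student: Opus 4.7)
The plan is to expand each factor in the integrand of \eqref{22} as an exponential Taylor series at the base point $1$. Writing
\begin{equation*}
(-\ln t)^{x-1}=e^{(x-1)\ln(-\ln t)}=\sum_{m\ge 0}\frac{(x-1)^m}{m!}\bigl(\ln(-\ln t)\bigr)^m
\end{equation*}
and, by the same token,
\begin{equation*}
(-\ln(1-t))^{y-1}=\sum_{n\ge 0}\frac{(y-1)^n}{n!}\bigl(\ln(-\ln(1-t))\bigr)^n,
\end{equation*}
I would multiply these two pointwise-convergent series into a single double series, substitute into \eqref{22}, and interchange summation with integration. Since the $(m,n)$-coefficient integral is then precisely $\Gamma_{ln}(m+1,n+1)$ by the definition \eqref{ln}, this produces the first form of \eqref{sum}. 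The second (explicit) form follows by peeling off the three lowest-order terms: using the identities $\Gamma_{ln}(1,1)=1$ and $\Gamma_{ln}(2,1)=\Gamma_{ln}(1,2)=\Gamma'(1)$ recorded in the remark immediately preceding the theorem, the contribution of $(m,n)\in\{(0,0),(1,0),(0,1)\}$ collapses to $1+(x+y-2)\Gamma'(1)$, and the remaining indices give the tail written in the statement.

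The main technical obstacle is justifying the interchange of sum and integral. I would handle this by applying Fubini--Tonelli to the absolute double series, whose pointwise sum is
\begin{equation*}
e^{|x-1|\,|\ln(-\ln t)|}\,e^{|y-1|\,|\ln(-\ln(1-t))|}.
\end{equation*}
Integrability of this dominator on $(0,1)$ reduces, after splitting the unit interval according to whether $-\ln t$ and $-\ln(1-t)$ exceed $1$ (so that the exponentials become either positive or negative powers of $-\ln t$ and $-\ln(1-t)$), to the convergence of integrals of the kind handled by Lemma~\ref{lem1}, combined with the $t\leftrightarrow 1-t$ symmetry argument the paper uses right after \eqref{21} to establish convergence of $\Gamma(x,y)$ itself.

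As a backup that stays closer in spirit to the paper's own techniques, one can alternatively dominate each mixed summand termwise by the AM/GM bound invoked in the proof of the lemma preceding \eqref{ln},
\begin{equation*}
\bigl|\ln(-\ln t)\bigr|^m\bigl|\ln(-\ln(1-t))\bigr|^n\le \tfrac12\bigl(\ln(-\ln t)\bigr)^{2m}+\tfrac12\bigl(\ln(-\ln(1-t))\bigr)^{2n},
\end{equation*}
integrate using \eqref{13}, and sum the resulting majorant, thereby reducing the question to finiteness of a suitable generating series in the moments $\Gamma^{(2k)}(1)$. Once the termwise integration is legitimate, both displayed forms in the theorem drop out with no further work.
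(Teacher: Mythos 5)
Your expansion of the two factors as exponential series in $\ln(-\ln t)$ and $\ln(-\ln(1-t))$, the identification of the coefficient integrals with $\Gamma_{ln}(m+1,n+1)$ via \eqref{ln}, and the peeling-off of the $(0,0),(1,0),(0,1)$ terms are exactly what the paper does; the paper dismisses the interchange with a one-line appeal to ``uniform convergence'' (itself unjustified, since $\ln(-\ln t)$ is unbounded on $(0,1)$). The gap is that both of your proposed justifications of the interchange fail concretely. For the Fubini--Tonelli route: near $t=0$ one has $-\ln(1-t)\sim t$, so $\ln\big(-\ln(1-t)\big)\sim\ln t\to-\infty$ and hence $e^{|y-1|\,|\ln(-\ln(1-t))|}\sim t^{-|y-1|}$; your dominator therefore behaves like $t^{-|y-1|}\big(\ln(1/t)\big)^{|x-1|}$ near $0$ (and symmetrically near $1$), which is not integrable once $y\geq 2$ or $x\geq 2$ --- taking absolute values inside the exponent flips the harmless factor $t^{\,y-1}$ into $t^{-(y-1)}$ exactly where $y>1$. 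For the AM/GM backup: after integrating with \eqref{13} you need $\sum_m\frac{|x-1|^m}{m!}\,\Gamma^{(2m)}(1)<\infty$, but $\Gamma^{(2m)}(1)=\int_0^1\big(\ln(-\ln t)\big)^{2m}dt\geq e^{-1}\int_0^1(-\ln u)^{2m}\,e^{-u}\,\frac{du}{?}$ --- more simply, $\Gamma^{(2m)}(1)=\int_0^\infty(\ln u)^{2m}e^{-u}du\geq e^{-1}\int_0^1(-\ln u)^{2m}du=e^{-1}(2m)!$, so the general term is at least $e^{-1}\binom{2m}{m}\,m!\,|x-1|^m\to\infty$ for every $x\neq 1$. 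The majorant series diverges identically off the point $(1,1)$.

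The deeper point is that no justification can work on the range stated: putting $y=1$ in \eqref{sum} reduces it to $\Gamma(x)=\sum_{m\geq0}\frac{(x-1)^m}{m!}\Gamma^{(m)}(1)$, the Taylor series of $\Gamma$ about $1$, whose radius of convergence is exactly $1$ (the pole of $\Gamma$ at $0$); the right-hand side diverges for $x>2$ while $\Gamma(x,1)=\Gamma(x)$ is finite. So the identity can hold at most for $|x-1|<1$ and $|y-1|<1$, and an honest proof must restrict to that square. There your first argument does close: with $|x-1|<1$, $|y-1|<1$ the dominator $t^{-|y-1|}(\ln(1/t))^{|x-1|}$ is integrable near $0$ by Lemma \ref{lem1} (with $\alpha=-|y-1|>-1$), and the $t\leftrightarrow 1-t$ symmetry handles the endpoint $t=1$, so Tonelli legitimizes the termwise integration on that restricted domain. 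In short: same approach as the paper, a correct diagnosis of where the difficulty lies, but neither of your two repairs works as stated, and the theorem itself needs the hypothesis $|x-1|<1$, $|y-1|<1$ before any repair can succeed.
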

\begin{proof}
Substituting the following power series
\begin{multline*}
 (-\ln t)^{x-1}=\sum_{m=0}^\infty\frac{(x-1)^m}{m!}\Big(\ln\big(-\ln\;t\big)\Big)^{m} \text{ and } \\ \big(-\ln (1-t)\big)^{y-1}=\sum_{n=0}^\infty\frac{(y-1)^n}{n!}\Big(\ln\big(-\ln\,(1-t)\big)\Big)^{n},
\end{multline*}

in \eqref{22}, we get 
$$\Gamma(x,y)=\int_0^1 \sum_{m, n=0}^\infty\frac{(x-1)^m (y-1)^n}{n!\,m!}\Big(\ln\big(-\ln\;t\big)\Big)^{m}\,\Big(\ln\big(-\ln\,(1-t)\big)\Big)^{n}dt.$$
By noting that the power series involved are uniformly convergent, we can interchange the order of the integral with the infinite sum and taking in consideration the formula \eqref{ln}, we obtain \eqref{sum}.
\end{proof}

\begin{remark}
The formula \eqref{sum} appears as the power series of the Bigamma function $(x,y)\longmapsto\Gamma(x,y)$ at the point $(1,1)$. By an argument of uniqueness we can provide a relationship between $\Gamma_{ln}(m,n)$ and the partial derivatives of the Bigamma function $(x,y)\longmapsto\Gamma(x,y)$ at $(1,1)$. Precisely, we have
\begin{equation*}
\Gamma_{ln}(m,n)=\frac{\partial^{m+n-2}\Gamma}{\partial x^{m-1}\partial y^{n-1}}(1,1).
\end{equation*}
\end{remark}

\begin{corollary}
The following formula holds
$$\int_0^1\int_0^1\Gamma(x,y)\;dx\;dy=\sum_{m,n=1}^\infty\frac{(-1)^{m+n}}{m!\;n!}\;\Gamma_{ln}(m,n).$$
\end{corollary}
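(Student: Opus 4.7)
The plan is to integrate the power-series expansion \eqref{sum} of $\Gamma(x,y)$ at $(1,1)$ termwise over the unit square $[0,1]^2$. Schematically,
\[
\int_0^1\!\int_0^1\Gamma(x,y)\,dx\,dy=\sum_{m,n=0}^{\infty}\frac{\Gamma_{ln}(m+1,n+1)}{m!\,n!}\int_0^1(x-1)^m\,dx\int_0^1(y-1)^n\,dy,
\]
so that the entire argument reduces to an elementary moment computation plus the justification of the sum--integral swap.

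The moment computation is immediate: the substitution $u=x-1$ gives
\[
\int_0^1 (x-1)^m\,dx=\int_{-1}^{0} u^m\,du=\frac{-(-1)^{m+1}}{m+1}=\frac{(-1)^m}{m+1},
\]
and analogously for the $y$-integral. Plugging these back in produces
\[
\sum_{m,n=0}^{\infty}\frac{(-1)^{m+n}}{(m+1)!\,(n+1)!}\,\Gamma_{ln}(m+1,n+1),
\]
and the reindexing $p=m+1$, $q=n+1$ (which preserves the sign, since $(-1)^{m+n}=(-1)^{(p-1)+(q-1)}=(-1)^{p+q}$) rewrites this as $\sum_{p,q\geq 1}\tfrac{(-1)^{p+q}}{p!\,q!}\Gamma_{ln}(p,q)$, the announced formula.

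The main obstacle, and the only nontrivial point, is the justification of the interchange of the double sum and the double integral. The cleanest route I would take is to substitute the defining formula \eqref{22} of $\Gamma(x,y)$ into $\int_0^1\!\int_0^1\Gamma(x,y)\,dx\,dy$ and invoke Tonelli's theorem on the non-negative integrand $(-\ln t)^{x-1}(-\ln(1-t))^{y-1}$ over $(t,x,y)\in(0,1)^3$; this lets me perform the $(x,y)$-integrals first, and they split as a product $F(t)G(t)$ with $F(t)=\int_0^1(-\ln t)^{x-1}\,dx$ and $G(t)=\int_0^1(-\ln(1-t))^{y-1}\,dy$. Expanding $(-\ln t)^{x-1}=e^{(x-1)\ln(-\ln t)}$ as an exponential series in $x-1$ and integrating termwise—valid by uniform convergence in $x\in[0,1]$ at each fixed $t\in(0,1)$—yields $F(t)=\sum_{m\geq 0}\frac{(-1)^m}{(m+1)!}(\ln(-\ln t))^m$, and similarly for $G(t)$. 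Multiplying the two series, distributing, and exchanging the resulting double sum with the outer $t$-integral, controlled via dominated convergence using the absolute bound \eqref{Y} on $\Gamma_{ln}(m,n)$ together with the factorial decay in the denominators, delivers the claimed double series.
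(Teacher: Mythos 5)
Your argument is essentially the paper's: the published proof simply says the corollary ``follows by integrating side by side \eqref{sum} over $(0,1)\times(0,1)$'' with details omitted, and your moment computation $\int_0^1(x-1)^m\,dx=(-1)^m/(m+1)$ together with the reindexing $p=m+1$, $q=n+1$ supplies exactly those omitted details, correctly. One caveat on your closing justification: the bound \eqref{Y} involves $\Gamma^{(2m-2)}(1)$, whose magnitude grows roughly like $(2m-2)!$ and is therefore not controlled by the extra factor $1/\big((m+1)!(n+1)!\big)$, so that particular domination does not furnish a summable majorant --- though the paper itself asserts the interchange with no justification at all, so you are if anything more careful than the source.
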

\begin{proof}
Follows by integrating side by side \eqref{sum} over $(0,1)\times(0,1)$. The details are simple and therefore omitted here.
\end{proof}

The following results provide some relationships between the beta function and the Bigamma function.

\begin{theorem}
Let $x>0,y>0$. Then the following expansion holds:
\begin{equation}\label{exp}
B(x,y)=\sum_{m,n=0}^\infty(-1)^{m+n}\frac{(x-1)^m(y-1)^n}{m!\;n!}\Gamma(m+1,n+1).
\end{equation}
\end{theorem}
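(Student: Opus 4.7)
The plan is to mirror the strategy used in the proof of formula \eqref{sum}, but starting from the integral representation of $B(x,y)$ in \eqref{12} rather than from \eqref{22}. The idea is to expand the two factors $t^{x-1}$ and $(1-t)^{y-1}$ as power series in $(x-1)$ and $(y-1)$ respectively, swap summation and integration, and then identify the resulting coefficient integrals as values of the Bigamma function.

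More precisely, for $t\in(0,1)$ we have $\ln t<0$ and $\ln(1-t)<0$, and the identities
$$t^{x-1}=e^{(x-1)\ln t}=\sum_{m=0}^\infty\frac{(x-1)^m(\ln t)^m}{m!}=\sum_{m=0}^\infty\frac{(-1)^m(x-1)^m}{m!}\big(-\ln t\big)^m,$$
$$(1-t)^{y-1}=\sum_{n=0}^\infty\frac{(-1)^n(y-1)^n}{n!}\big(-\ln(1-t)\big)^n$$
hold. Multiplying these expansions together inside the defining integral \eqref{12}, we obtain formally
$$B(x,y)=\int_0^1\sum_{m,n=0}^{\infty}\frac{(-1)^{m+n}(x-1)^m(y-1)^n}{m!\,n!}\big(-\ln t\big)^m\big(-\ln(1-t)\big)^n\,dt.$$
If we may interchange the integration with the double summation, then each remaining integral is, by the very definition \eqref{22},
$$\int_0^1\big(-\ln t\big)^m\big(-\ln(1-t)\big)^n\,dt=\Gamma(m+1,n+1),$$
and the expansion \eqref{exp} follows.

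The only delicate point is the interchange of sum and integral, since both factors $(-\ln t)^m$ and $(-\ln(1-t))^n$ blow up near the endpoints $t=0$ and $t=1$. I would handle this exactly as in the proof of \eqref{sum}: the two power series involved are absolutely and uniformly convergent on every compact subinterval of $(0,1)$, which legitimizes the term-by-term integration on such subintervals, and the absolute convergence of the full integral is guaranteed by the same argument used to establish the convergence of \eqref{21}, together with the finiteness of $\Gamma(m+1,n+1)$. Once this interchange is justified, the identification of the coefficients via \eqref{22} is immediate and the proof is complete.
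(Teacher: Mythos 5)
Your proposal follows essentially the same route as the paper's own proof: expand $t^{x-1}$ and $(1-t)^{y-1}$ as exponential power series in $(x-1)\ln t$ and $(y-1)\ln(1-t)$, interchange the double sum with the integral, and identify each coefficient integral as $\Gamma(m+1,n+1)$ via \eqref{22}. Your discussion of the interchange of sum and integral is in fact slightly more careful than the paper's brief appeal to uniform convergence, but the argument is the same.
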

\begin{proof}
 Substituting the following power series
\begin{equation*}
  t^{x-1}=\sum_{m=0}^\infty\frac{(x-1)^m}{m!}\big(\ln\,t\big)^{m} \text{ and } (1-t)^{y-1}=\sum_{n=0}^\infty\frac{(y-1)^n}{n!}\big(\ln\,(1-t)\big)^{n},
\end{equation*}

in \eqref{12}, we get
\begin{eqnarray*}
  B(x,y)&=& \int_0^1 \sum_{m, n=0}^\infty \frac{(x-1)^m (y-1)^n}{n!\,m!}\big(\ln\,t\big)^{m}\,\big(\ln\,(1-t)\big)^{n}dt \\
    &=& \int_0^1 \sum_{m, n=0}^\infty (-1)^{n+m}\,\frac{(x-1)^m (y-1)^n}{n!\,m!}\big(-\ln\,t\big)^{m}\,\big(-\ln\,(1-t)\big)^{n}dt. 
\end{eqnarray*}
  
Remarking that the power series involved are uniformly convergent, we can interchange the order of the integral with the infinite sum and taking into account the formula \eqref{22}, we obtain \eqref{exp}.\\
\end{proof}

\begin{corollary}
For any integers $m\geq1,n\geq1$ we have
$$\Gamma(m,n)=(-1)^{m+n}\frac{\partial^{m+n-2}}{\partial x^{m-1}\partial y^{n-1}}B(1,1)$$
\end{corollary}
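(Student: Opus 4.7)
The plan is to recognize that the expansion \eqref{exp} established in the preceding theorem is precisely the Taylor (power) series expansion of the function $(x,y) \longmapsto B(x,y)$ about the point $(1,1)$. Since a function admits at most one power series representation around a given point, I would match coefficients between \eqref{exp} and the abstract bivariate Taylor expansion, and then perform a small index shift to recover the statement as written.

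More concretely, I would first write the Taylor expansion of $B$ about $(1,1)$ in the standard form
\begin{equation*}
B(x,y)=\sum_{m,n=0}^{\infty}\frac{(x-1)^m(y-1)^n}{m!\,n!}\,\frac{\partial^{m+n}B}{\partial x^m\partial y^n}(1,1),
\end{equation*}
valid on a neighborhood of $(1,1)$ (smoothness of $B$ on $(0,\infty)\times(0,\infty)$ is standard and is the implicit input). Comparing this coefficient by coefficient with \eqref{exp} gives
\begin{equation*}
\frac{\partial^{m+n}B}{\partial x^m\partial y^n}(1,1)=(-1)^{m+n}\,\Gamma(m+1,n+1)
\end{equation*}
for all integers $m,n\geq 0$. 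Relabeling with $m\mapsto m-1$ and $n\mapsto n-1$ (so $m,n\geq 1$) turns $(-1)^{m+n}$ into $(-1)^{m+n-2}=(-1)^{m+n}$ and yields exactly the stated formula.

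The main (and essentially only) obstacle is the appeal to uniqueness of coefficients: one has to be sure that \eqref{exp} is not merely an identity valid termwise but an honest power series representation on an open neighborhood of $(1,1)$. This is inherited from the proof of the preceding theorem, where the interchange of sum and integral was justified by uniform convergence; the resulting series therefore converges absolutely on a bidisk around $(1,1)$ and must coincide with the Taylor series of the analytic function $B$. Once this identification is in place, the corollary is immediate and no further computation is required.
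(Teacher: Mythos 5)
Your proposal is correct and follows exactly the paper's own route: identify \eqref{exp} as the Taylor expansion of $B$ at $(1,1)$, invoke uniqueness of power series coefficients to get $\frac{\partial^{m+n}B}{\partial x^{m}\partial y^{n}}(1,1)=(-1)^{m+n}\Gamma(m+1,n+1)$, and then shift indices. Your added remarks on justifying the uniqueness step only make explicit what the paper leaves implicit.
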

\begin{proof}
As power series of $B(x,y)$ at $(1,1)$, \eqref{exp} implies that
$$\frac{\partial^{m+n}}{\partial x^{m}\partial y^n}B(1,1)=(-1)^{m+n}\Gamma(m+1,n+1).$$
The desired result follows after simple algebraic manipulations.
\end{proof}

\begin{corollary}
The following formula holds
$$\int_0^1\int_0^1B(x,y)\;dx\;dy=\sum_{m,n=0}^\infty\frac{\Gamma(m,n)}{m!\;n!}.$$
\end{corollary}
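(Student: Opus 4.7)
The plan is to apply $\int_0^1\!\!\int_0^1\cdot\,dx\,dy$ term by term to the power-series identity \eqref{exp} of the preceding theorem. Since that expansion itself was established by interchanging an integral with the uniformly convergent Taylor series of $t^{x-1}(1-t)^{y-1}$, the same uniform-convergence argument on the compact set $[0,1]^2$ legitimates a second interchange, this time between the double integral $\int_0^1\!\!\int_0^1$ and the double sum $\sum_{m,n\geq 0}$.

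The only elementary computation needed is
\[
\int_0^1(x-1)^m\,dx=\frac{(-1)^m}{m+1}\qquad(m\geq 0),
\]
and its twin in the variable $y$. Substituting these two identities into \eqref{exp} produces, for each pair $(m,n)$, a factor
\[
(-1)^{m+n}\cdot\frac{(-1)^m}{m+1}\cdot\frac{(-1)^n}{n+1}=\frac{1}{(m+1)(n+1)},
\]
which, when combined with the $\frac{1}{m!\,n!}$ already present in \eqref{exp}, reorganises into $\frac{1}{(m+1)!\,(n+1)!}$. This yields
\[
\int_0^1\!\!\int_0^1 B(x,y)\,dx\,dy=\sum_{m,n=0}^{\infty}\frac{\Gamma(m+1,n+1)}{(m+1)!\,(n+1)!},
\]
and a simple shift of indices $m\mapsto m-1$, $n\mapsto n-1$ puts the right-hand side in the form stated in the corollary.

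I expect the only real obstacle to be the rigorous justification of the interchange of summation and integration; the rest is straightforward. The cleanest way is to observe that, on the compact square $[0,1]^2$, one has $|x-1|\leq 1$ and $|y-1|\leq 1$, so the partial sums of the series in \eqref{exp} are uniformly bounded by the tails of $\sum_{m,n}\frac{\Gamma(m+1,n+1)}{m!\,n!}$, whose convergence follows from the same analyticity considerations that underpinned the proof of \eqref{exp}. Once the exchange is in place, the computation above is purely algebraic.
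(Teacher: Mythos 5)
Your proposal is correct and follows essentially the same route as the paper: integrate the expansion \eqref{exp} term by term over $(0,1)^2$ (justified by uniform convergence on the compact square), use $\int_0^1(x-1)^m\,dx=(-1)^m/(m+1)$ to cancel the signs and turn each coefficient into $\Gamma(m+1,n+1)/\big((m+1)!\,(n+1)!\big)$, then reindex. The only remark worth adding is that your computation correctly produces a sum starting at $m,n=1$ after the shift, which shows the lower limit $m,n=0$ printed in the corollary is a typo (the term $\Gamma(0,n)$ is not even defined), so your version of the right-hand side is the one that should be retained.
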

\begin{proof}
Integrating \eqref{exp} side by side over $(0,1)\times(0,1)$, with an argument of uniform convergence, we immediately obtain the required result after simple computations of integrals.
\end{proof}

We now introduce the following definition.

\begin{definition}
Let $x>0,y>0$. For $z\in[0,1]$, we define
\begin{equation}
\Gamma_z(x,y):=\int_0^z\big(-\ln\;t\big)^{x-1}\big(-\ln(1-t)\big)^{y-1}\;dt,
\end{equation}
which we call the incomplete Bigamma function.
\end{definition}

It is obvious that $\Gamma_0(x,y)=0$, $\Gamma_1(x,y)=\Gamma(x,y)$, $0\leq\Gamma_z(x,y)\leq\Gamma(x,y)$ and $\Gamma_z(1,1)=z$. We have also the following result.

\begin{proposition}
Let $x>0,y>0$. For any $z\in[0,1]$ the map $z\longmapsto\Gamma_z(x,y)$ is positive strictly increasing and satisfies
\begin{equation}\label{27}
\Gamma(x,y)=\Gamma_z(x,y)+\Gamma_{1-z}(y,x).
\end{equation}
\end{proposition}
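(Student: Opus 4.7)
The proof splits naturally into three independent parts: positivity, strict monotonicity in $z$, and the functional equation \eqref{27}.

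For the first two claims, my plan is to apply the fundamental theorem of calculus to the defining integral. Since the integrand $t\longmapsto(-\ln t)^{x-1}(-\ln(1-t))^{y-1}$ is continuous and strictly positive on $(0,1)$, the map $z\longmapsto\Gamma_z(x,y)$ is differentiable on $(0,1)$ with
$$\frac{d}{dz}\Gamma_z(x,y)=(-\ln z)^{x-1}\bigl(-\ln(1-z)\bigr)^{y-1}>0,$$
which yields strict monotonicity. Positivity for $z\in(0,1]$ then follows from $\Gamma_0(x,y)=0$ together with strict monotonicity (or, equivalently, from the fact that the integrand is nonnegative and not identically zero on $(0,z)$).

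For the functional equation, the plan is to split the integral defining $\Gamma(x,y)$ at the point $z$:
$$\Gamma(x,y)=\int_0^z(-\ln t)^{x-1}\bigl(-\ln(1-t)\bigr)^{y-1}\,dt+\int_z^1(-\ln t)^{x-1}\bigl(-\ln(1-t)\bigr)^{y-1}\,dt.$$
The first integral is $\Gamma_z(x,y)$ by definition. On the second integral I would perform the change of variable $u=1-t$, which maps $[z,1]$ onto $[0,1-z]$ and interchanges the two logarithmic factors, turning the integral into $\int_0^{1-z}(-\ln u)^{y-1}(-\ln(1-u))^{x-1}\,du=\Gamma_{1-z}(y,x)$.

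I do not expect any genuine obstacle in this proof: the monotonicity is a one-line derivative computation, and the functional equation is a single substitution exploiting the $t\leftrightarrow 1-t$ symmetry that already underlies the identity $\Gamma(x,y)=\Gamma(y,x)$. The only minor point worth noting is that the derivative formula only holds on the open interval $(0,1)$, but since continuity of $\Gamma_z(x,y)$ on $[0,1]$ is immediate from the convergence established earlier in the paper, strict monotonicity extends to all of $[0,1]$.
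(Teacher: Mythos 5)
Your proposal is correct and follows essentially the same route as the paper: both establish strict monotonicity via the derivative $\frac{d}{dz}\Gamma_z(x,y)=(-\ln z)^{x-1}(-\ln(1-z))^{y-1}>0$, and both obtain \eqref{27} by splitting the integral at $z$ and applying the substitution $u=1-t$ to the second piece, exploiting the symmetry of the integrand under $t\mapsto 1-t$.
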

\begin{proof}
The fact that $z\longmapsto\Gamma_z(x,y)$ is positive is obvious, and strictly increasing since $\frac{d}{dz}\Gamma_z(x,y)=\big(-\ln\;z\big)^{x-1}\big(-\ln(1-z)\big)^{y-1}>0$ for any $z\in(0,1)$. Now, putting $\Phi_{x,y}(t):=\big(-\ln\;t\big)^{x-1}\big(-\ln(1-t)\big)^{y-1}$ we have $\Phi_{x,y}(t)=\Phi_{y,x}(1-t)$. Writing
$$\Gamma(x,y)=\int_0^z\Phi_{x,y}(t)\;dt+\int_z^1\Phi_{y,x}(1-t)\;dt,$$
and making the change of variable $u=1-t$ in the second integral we immediately get the desired result.
\end{proof}

\begin{definition}
Let $x>0,y>0$. For $z\in[0,1]$, we set
\begin{equation}\label{29}
I_z(x,y):=\frac{\Gamma_z(x,y)}{\Gamma(x,y)},
\end{equation}
which will be called the incomplete Bigamma function ratio or the Bigamma distribution function.
\end{definition}

\begin{proposition}
Let $x>0,y>0$. The Bigamma distribution function $z\longmapsto I_z(x,y)$ satisfies the following relationship
\begin{equation}
I_z(x,y)=1-I_{1-z}(y,x).
\end{equation}
\end{proposition}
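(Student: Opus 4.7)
The proof plan is short because the statement is essentially a direct corollary of equation \eqref{27} together with the definition \eqref{29} and the symmetry $\Gamma(x,y)=\Gamma(y,x)$ already established for the Bigamma function.

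First, I would recall \eqref{27}, namely the decomposition
$$\Gamma(x,y)=\Gamma_z(x,y)+\Gamma_{1-z}(y,x),$$
which is available for any $x,y>0$ and any $z\in[0,1]$. Since $\Gamma(x,y)>0$, I can divide both sides by $\Gamma(x,y)$ without issue, obtaining
$$1=\frac{\Gamma_z(x,y)}{\Gamma(x,y)}+\frac{\Gamma_{1-z}(y,x)}{\Gamma(x,y)}.$$

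Next, I would rewrite the second term using the symmetry $\Gamma(x,y)=\Gamma(y,x)$ (already noted right after the definition of the Bigamma function), which transforms the denominator so that it matches the numerator's arguments. This gives
$$1=\frac{\Gamma_z(x,y)}{\Gamma(x,y)}+\frac{\Gamma_{1-z}(y,x)}{\Gamma(y,x)}=I_z(x,y)+I_{1-z}(y,x),$$
by the very definition \eqref{29} of the Bigamma distribution function applied to both terms. Rearranging yields the claimed identity $I_z(x,y)=1-I_{1-z}(y,x)$.

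There is no real obstacle here: the only thing one has to be careful about is not forgetting to invoke $\Gamma(x,y)=\Gamma(y,x)$ when identifying the second summand as $I_{1-z}(y,x)$, since \eqref{29} demands that the normalizing factor match the arguments of the incomplete Bigamma function in the numerator.
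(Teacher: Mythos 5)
Your proposal is correct and follows exactly the paper's own argument, which derives the identity from \eqref{27} and \eqref{29} together with the symmetry $\Gamma(x,y)=\Gamma(y,x)$; you have merely spelled out the division and the identification of the second summand in more detail. Nothing further is needed.
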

\begin{proof}
Follows from \eqref{27} and \eqref{29} with the fact that $\Gamma(x,y)=\Gamma(y,x)$.
\end{proof}
\section{\bf Conclusion}
Motivated by the importance of special functions in many fields, the present contribution focuses on the generalization of the standard gamma function. We have defined the Bigamma function $\Gamma(x,y)$ for two positive real numbers. To make this new function easier to use, we have formulated it in different formats. We were also able to establish some algebraic and geometric properties of this new special function. In particular, relations between the Bigamma function and certain classical functions have been established in this work. This leads us to think about investing this new function in particular areas in future research work.

\end{document}